\documentclass[12p]{amsart}
\usepackage[top=1.5in, bottom=1.5in, left=1.5in, right=1.5in]{geometry}
\usepackage{fourier} 
\usepackage{latexsym}
\usepackage{amssymb}
\usepackage{amsmath}
\usepackage{amscd}
\usepackage{graphicx}
\usepackage{float}
\usepackage{mathrsfs} %\mathscr
\usepackage{enumerate}
\usepackage{tikz}

\tikzset{node distance=1.5cm, auto}

\newtheorem{theorem}{Theorem}[section]
\newtheorem{proposition}[theorem]{Proposition}
\newtheorem{lemma}[theorem]{Lemma}

\theoremstyle{definition}
\newtheorem{definition}[theorem]{Definition}
\newtheorem{question}[theorem]{Question}
\theoremstyle{remark}
\newtheorem{example}[theorem]{Example}
\newtheorem{remark}[theorem]{Remark}

\def\<{\langle}
\def\>{\rangle}
\def\RR{\mathbb{R}}

\def\NN{\mathbb{N}}
\def\CC{\mathbb{C}}
\def\ZZ{\mathbb{Z}}

\def\T{\mathbf{T}}

\def\I{\mathfrak{I}}
\def\A{\mathscr{A}}
\def\T{\mathscr{T}}
\def\L{\mathscr{L}}
\def\sgn{\operatorname{sgn}}
\def\1{\mathbb{1}}
\def\I{\mathbb{I}}

\title{On the multivariate Fujiwara bound for exponential sums}
\date{\today}
\author{Jens Forsg{\aa}rd}
\address{Department of Mathematics, Texas A\&M University, College Station, TX 77843}
\email{jensf@math.tamu.edu}

\begin{document}

\maketitle

\begin{abstract}
We prove the multivariate Fujiwara bound for exponential sums: for a $d$-variate exponential sum $f$
with scaling parameter $\mu$, if $x$ is contained in the amoeba $\A(f)$, then the distance from $x$ to the Archimedean tropical variety associated to $f$ is at most $d \sqrt{d}\, 2\log(2 + \sqrt{3})/ \mu$.
If $f$ is polynomial, then the bound can be improved to $d \log(2 + \sqrt{3})$.
\end{abstract}

%%%%%%%%%%%%%%%%%%%%%%%%%%%%%%%%%%%%%%%%%%
\section{Introduction}%%%%%%%%%%%%%%%%%%%%%%%%%%%%%%%
\label{sec:Intro}%%%%%%%%%%%%%%%%%%%%%%%%%%%%%%%%%%
%%%%%%%%%%%%%%%%%%%%%%%%%%%%%%%%%
It is a classical problem to find an upper bound on the norms of the roots of a complex univariate polynomial 
$g(w) = \sum_{k=0}^n c_k w^{k}$
in terms of the coefficients $c$.
In applications the near optimal Fujiwara bound
 \begin{equation}
 \label{eqn:Fujiwara1}
 2 \max \left( \left|\frac{c_{n-1}}{c_n}\right|, \left|\frac{c_{n-2}}{c_n}\right|^{\frac12},
 \dots, \left|\frac{c_{1}}{c_n}\right|^{\frac1{n-1}}, \left|\frac{c_{0}}{2c_n}\right|^{\frac1n}\right)
 \end{equation}
 is in common use.
 Following the presentation of Carmichael \cite{Car18}, Fujiwara's theorem from \cite{Fuj16}
 can be stated as follows\footnote{Unfortunately, Fujiwara's
 paper has not been available to the author.}:
 no root of the algebraic equation $g(w) = 0$ can be of
 absolute value greater than the unique positive root of the equation
 \begin{equation}
 \label{eqn:Fujiwara2}
 |c_n| \sigma^{n} = \sum_{k=0}^{n-1} |c_k| \sigma^{k}.
 \end{equation}
Fujiwara's theorem is a consequence of the triangle inequality,
and has appeared on several instances in the modern literature. 
Deducing the bound \eqref{eqn:Fujiwara1} from Fujiwara's theorem is straightforward:
if $\sigma$ is strictly greater than 
each term of \eqref{eqn:Fujiwara1}, then
\[
\sum_{k=0}^{n-1} \left|\frac{c_k}{c_n} \right| \sigma^{k-n} 
<2^{1-n}+\sum_{k=1}^{n-1} 2^{k-n}= 1,
\]
from which the result follows.

It is the purpose of this note to deduce the multivariate Fujiwara bound.
The reader might feel justly perplexed as an algebraic hypersurface in $\CC^d$, for $d \geq 2$,
is non-compact; hence, it cannot be bounded in norm. The bound we present is 
of a different flavor. In \eqref{eqn:Fujiwara1}, the arguments of the maximum function 
are the roots of a certain tropical polynomial constructed from $g(w)$,
and Fujiwara's theorem bounds the distance from the amoeba of $g(w)$
to this tropical variety. %, cf.\ Theorem~\ref{thm:EPR1dim}.
The bound is manifested in the factor of two appearing in front of the
maximum operator in \eqref{eqn:Fujiwara1}.

We have chosen to take the approach of exponential sums.
That is, we consider a $d$-variate exponential sum
\begin{equation}
\label{eqn:f}
 f(z) = \sum_{k=0}^n c_k \,e^{\<\lambda_k,z\>},
\end{equation}
where $c_k \in \CC$ and $\lambda_k \in \RR^d$ for $k= 0, \dots, n$.
We collect the exponents $\lambda_k$ for $k=0, \dots, n$, in a \emph{support set} $\Lambda$. 
We will say that $f$ is \emph{polynomial} if $\Lambda \subset \ZZ^d$. 
The name reflects the simple fact that $f$ is polynomial if and only if there exists 
a polynomial $g$ such that $f(z) = g(e^z)$.

Let $z = x + i y$, where $x, y \in \RR^d$. 
The \emph{amoeba} $\A(f)$ of the exponential sum $f$ is defined as the projection of 
the zero locus $V(f) \in \CC^d$ under taking component-wise real parts, i.e., $\A(f) = \Re(V(f))$.
In the case of a polynomial exponential sum the amoeba $\A(f)$ coincides 
with the classical amoeba of the polynomial $g$,
introduced in \cite{GKZ94} as the image of $V(g)\subset (\CC^*)^d$ under the logarithmic absolute value map.

The ``tropicalization'' of $f$ is by definition, in this context, the ``tropical exponential sum'' 
\begin{equation}
\label{eqn:tropf}
 \hat f (x)= \max_{k=0, \dots, n}\left(\log|c_k| + \<\lambda_k, x\>\right).
\end{equation}
The tropical variety $V(\hat f)$, denoted $\T(f)$, is known as the Archimedean tropical variety of $f$.
For a point $x \in \RR^d \setminus \T(f)$, let $\iota(x)$ denote the (unique) index $k$ such that
$\hat f(x) = \log|c_k| + \<\lambda_k, x\>$. 

Define $\mu(\Lambda)$ to be the minimal (Euclidean) distance between two points of $\Lambda$. 
We will call $\mu$ the scaling parameter of $f$.
As the standard scalar product is a bilinear form,
we can simultaneously 
dilate the amoeba $\A(f)$ and the tropical variety $\T(f)$, at the cost of an adjungate 
dilation of the support set $\Lambda$. It follows that any bound on the (Euclidean) distance 
between $\A(f)$ and $\T(f)$ must take the parameter $\mu$ into account.

In this modern language, Fujiwara's bound is part $b)$ of the following theorem.
\begin{theorem}[Erg{\"u}r, Paouris, and Rojas, \cite{EPR14}]
\label{thm:EPR1dim}
Let $d=1$, let $\lambda_0 < \dots < \lambda_n$, and let $x \in \RR$.
\begin{enumerate}[a)]
\item If\/ $x$ is of distance at least\/ $\log(3)/\mu$ from
$\T(f)$, then $x \in \RR\setminus \A(f)$. 
\item If\/ $\iota = \iota(x)$
is $0$ or $n$, and $x$ is of distance at least\/ $\log(2)/\mu$ from
$\T(f)$, then $x \in \RR \setminus \A(f)$.
\end{enumerate}
\end{theorem}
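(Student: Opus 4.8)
The plan is to prove the contrapositive in the classical Fujiwara spirit: if $x$ lies sufficiently far from $\T(f)$, then for \emph{every} $y \in \RR$ the dominant term of $f(x+iy)$ strictly exceeds the sum of the moduli of the remaining terms, whence $f(x+iy) \neq 0$ and $x \notin \A(f)$. Writing $z = x+iy$ and using $\lambda_k \in \RR$, we have $|c_k e^{\lambda_k z}| = |c_k|\,e^{\lambda_k x} = e^{a_k(x)}$ with $a_k(x) = \log|c_k| + \lambda_k x$. If $\iota = \iota(x)$ is the dominant index, the reverse triangle inequality gives
\begin{equation*}
|f(x+iy)| \;\geq\; e^{a_\iota(x)} - \sum_{k \neq \iota} e^{a_k(x)},
\end{equation*}
so it suffices to show $\sum_{k \neq \iota} e^{-d_k} < 1$, where $d_k = a_\iota(x) - a_k(x) > 0$.

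First I would translate the distance hypothesis into lower bounds on the gaps $d_k$. Since $\hat f$ is the upper envelope of the affine functions $a_k$, it is convex and piecewise linear, and $\T(f)$ is its finite set of breakpoints. The cell containing $x$ on which $\iota$ is dominant is an interval $(b^-, b^+)$, and a short computation with the linear functions $a_k$ shows that the right endpoint satisfies $b^+ - x = \min_{k > \iota} d_k/(\lambda_k - \lambda_\iota)$ and, symmetrically, $x - b^- = \min_{k < \iota} d_k/(\lambda_\iota - \lambda_k)$. Hence, if $\delta$ denotes the distance from $x$ to $\T(f)$, then $d_k \geq \delta\,|\lambda_k - \lambda_\iota|$ for every $k \neq \iota$. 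Because $\mu$ is the minimal gap between consecutive exponents, $|\lambda_k - \lambda_\iota| \geq \mu\,|k - \iota|$, and therefore $d_k \geq \delta\mu\,|k - \iota|$.

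The remaining step is to sum a geometric series. For part a), each positive integer value of $|k - \iota|$ is attained by at most two indices, so
\begin{equation*}
\sum_{k \neq \iota} e^{-d_k} \;\leq\; \sum_{k \neq \iota} e^{-\delta\mu|k-\iota|} \;<\; 2\sum_{j=1}^{\infty} e^{-\delta\mu j} \;=\; \frac{2}{e^{\delta\mu}-1},
\end{equation*}
the middle inequality being strict since $\Lambda$ is finite. This last quantity is at most $1$ precisely when $\delta \geq \log(3)/\mu$, which yields $\sum_{k\neq\iota} e^{-d_k} < 1$. For part b), the hypothesis $\iota \in \{0, n\}$ forces all remaining indices to lie on one side of $\iota$, so each value of $|k - \iota|$ is attained at most once and the factor of $2$ disappears; the bound $1/(e^{\delta\mu}-1) \leq 1$ then holds as soon as $\delta \geq \log(2)/\mu$.

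I expect the main obstacle to be the second paragraph, namely correctly identifying the breakpoints of the envelope and verifying that the distance to $\T(f)$ is governed by $\min_{k \neq \iota} d_k/|\lambda_k - \lambda_\iota|$ rather than by any single gap. In particular one must check that the index first tying with $\iota$ as $x$ moves toward a breakpoint actually attains the global maximum there, so that the breakpoint genuinely lies on $\T(f)$; this follows from the convexity of $\hat f$. Once this dictionary between distances and the $d_k$ is in place, the triangle-inequality reduction and the geometric summation are routine.
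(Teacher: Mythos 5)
Your proof is correct, and it is essentially the argument the paper itself relies on: the paper quotes this one-dimensional result from \cite{EPR14} without proof, but your two key steps --- translating ``distance at least $\delta$ from $\T(f)$'' into the gap inequalities $d_k \geq \delta\,|\lambda_k - \lambda_\iota|$, and then bounding $\sum_{k\neq\iota}e^{-d_k}$ by a geometric series via the triangle inequality --- are exactly Lemma~\ref{lem:DistanceInequality} and Theorem~\ref{thm:DistanceImpliesComplement} specialized to $d=1$, together with the computation $\Xi_\iota(\delta) < 2e^{-\delta\mu}/(1-e^{-\delta\mu})$ appearing in the remark after Theorem~\ref{thm:PolynomialDistanceBound}. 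Your care in checking that the nearest breakpoint of the convex envelope actually lies on $\T(f)$ is well placed and matches the content of the paper's distance lemma.
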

We will give the multivariate Fujiwara bound in two versions, one for polynomial exponential
sums and one for general exponential sums.
This distinction is somewhat surprising. 
In Theorem~\ref{thm:EPR1dim}, as well as in the bounds given in 
\cite{EPR14} (to be compared with \cite{AKNR13}),
the generalization from polynomial to arbitrary exponential sums
is manifested in a division by the parameter $\mu$;
in the multivariate setting there are further complications.

\begin{theorem}
\label{thm:PolynomialDistanceBound}
Let $f$ be a polynomial exponential sum. If\/ $x \in \RR^d$ is of (Euclidean) distance at least 
\begin{equation}
\label{eqn:DegreeBound}
\delta \geq d\, \log\big(2+\sqrt{3}\big)
\end{equation}
from $\T(f)$, then $x \in \RR^d\setminus \A(f)$.
\end{theorem}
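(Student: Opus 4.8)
The plan is to prove that, under the hypothesis, the term of $f$ indexed by $\iota=\iota(x)$ dominates all the others in modulus, so that $f(x+iy)\neq 0$ for every $y\in\RR^d$. Put $a_k=\log|c_k|+\<\lambda_k,x\>$, so that $\hat f(x)=\max_k a_k=a_\iota$ and $|c_k\,e^{\<\lambda_k,x+iy\>}|=e^{a_k}$ for all $y$. Since $\operatorname{dist}(x,\T(f))\ge\delta>0$ we have $x\notin\T(f)$, hence $\iota$ is well defined and $a_\iota>a_k$ for every $k\neq\iota$. The reverse triangle inequality then gives
\[
|f(x+iy)|\ \ge\ e^{a_\iota}-\sum_{k\neq\iota}e^{a_k}\ =\ e^{a_\iota}\Big(1-\sum_{k\neq\iota}e^{-(a_\iota-a_k)}\Big),
\]
so it is enough to show that $\sum_{k\neq\iota}e^{-(a_\iota-a_k)}<1$.

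Next I would turn the distance hypothesis into lower bounds on the gaps $a_\iota-a_k$. For $k\neq\iota$ set $\ell_k(x')=\big(\log|c_\iota|+\<\lambda_\iota,x'\>\big)-\big(\log|c_k|+\<\lambda_k,x'\>\big)$, an affine function with gradient $\lambda_\iota-\lambda_k$ and $\ell_k(x)=a_\iota-a_k>0$; its zero set $H_k$ lies at Euclidean distance $\ell_k(x)/\|\lambda_\iota-\lambda_k\|$ from $x$. I claim $\operatorname{dist}(x,\T(f))\le \ell_k(x)/\|\lambda_\iota-\lambda_k\|$ for every $k$. Indeed, let $p$ be the foot of the perpendicular from $x$ to $H_k$ and consider the continuous function $t\mapsto\min_{j\neq\iota}\ell_j\big((1-t)x+tp\big)$ on $[0,1]$: it is positive at $t=0$ and at most $\ell_k(p)=0$ at $t=1$, so it vanishes at a first $t_0\in(0,1]$. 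At $q=(1-t_0)x+t_0p$ all $\ell_j(q)\ge0$ (so $\iota$ attains $\hat f(q)$) while some $\ell_{j_0}(q)=0$ (so $j_0$ attains it too); hence $q\in\T(f)$ and $\operatorname{dist}(x,\T(f))\le|x-q|\le|x-p|$. Combined with $\operatorname{dist}(x,\T(f))\ge\delta$, this yields $a_\iota-a_k\ge\delta\,\|\lambda_\iota-\lambda_k\|$ for all $k\neq\iota$.

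Because $f$ is polynomial, the vectors $\lambda_k-\lambda_\iota$ are distinct nonzero elements of $\ZZ^d$, so enlarging the index set and applying the previous bound,
\[
\sum_{k\neq\iota}e^{-(a_\iota-a_k)}\ \le\ \sum_{k\neq\iota}e^{-\delta\|\lambda_\iota-\lambda_k\|}\ \le\ \sum_{v\in\ZZ^d\setminus\{0\}}e^{-\delta\|v\|}.
\]
Writing $\|\cdot\|$ for the Euclidean and $\|\cdot\|_1$ for the $\ell^1$ norm and using $\|v\|\ge\|v\|_1/\sqrt d$ (Cauchy--Schwarz), the last sum factors over coordinates into geometric series:
\[
\sum_{v\in\ZZ^d\setminus\{0\}}e^{-\delta\|v\|}\ \le\ \Big(\sum_{m\in\ZZ}e^{-(\delta/\sqrt d)|m|}\Big)^{d}-1\ =\ \Big(\frac{1+u}{1-u}\Big)^{d}-1,
\]
where $u=e^{-\delta/\sqrt d}$. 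Since this bound decreases as $\delta$ grows, it suffices to treat $\delta=d\log(2+\sqrt3)$, for which $u=(2+\sqrt3)^{-\sqrt d}=(2-\sqrt3)^{\sqrt d}$, and the theorem reduces to the scalar inequality
\[
\Big(\frac{1+(2-\sqrt3)^{\sqrt d}}{1-(2-\sqrt3)^{\sqrt d}}\Big)^{d}\ <\ 2.
\]

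The crux is this last inequality. At $d=1$ it is exactly $\sqrt3<2$ --- which is where the value $2+\sqrt3=e^{\operatorname{arccosh}2}$ enters --- but the left-hand side is not monotone in $d$: regarded as a function of a real parameter $d\ge1$ it rises to roughly $1.87$ near $d\approx2$ before decreasing back to $1$, so the estimate is genuinely used and the margin, while safe, is modest. I would finish by taking logarithms and showing that $d\,\log\frac{1+(2-\sqrt3)^{\sqrt d}}{1-(2-\sqrt3)^{\sqrt d}}$ is unimodal in $d$ with a single interior maximum strictly below $\log2\approx0.69$ (its value is $\approx0.63$). Everything preceding this point is soft; verifying that $2+\sqrt3$ is large enough to defeat this one-variable bump is the only real obstacle, and it is a routine piece of calculus rather than a structural difficulty.
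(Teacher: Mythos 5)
Your argument is correct in outline and reaches the stated bound, but it diverges from the paper at the decisive step. Both proofs reduce the theorem to showing $\sum_{\beta\in\ZZ^d\setminus\{0\}}e^{-\delta|\beta|}<1$ (your ``first crossing'' argument for $a_\iota-a_k\ge\delta\|\lambda_\iota-\lambda_k\|$ is a careful rederivation of Lemma~\ref{lem:DistanceInequality}, and your lopsidedness conclusion is Theorem~\ref{thm:DistanceImpliesComplement}). Where you apply the global estimate $|\beta|\ge\|\beta\|_1/\sqrt d$ and factor the sum into $\bigl(\tfrac{1+u}{1-u}\bigr)^d-1$ with $u=e^{-\delta/\sqrt d}$, the paper instead partitions $\ZZ^d\setminus\{0\}$ by sign pattern: for a vector supported on $m$ coordinates it uses $|\gamma|\ge\tfrac1m\|\gamma\|_1$, and the binomial theorem then collapses the bound to $e^{-\delta}\bigl((1+(1+\sqrt3))^d-1\bigr)\le 1-(2+\sqrt3)^{-d}<1$ with no further work. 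The trade-off is real: your estimate is much weaker on vectors supported on few coordinates (it bounds $e^{-\delta}$ by $e^{-\delta/\sqrt d}$ for $\beta=e_1$), and this is exactly why you are left with the non-monotone scalar inequality $\bigl(\tfrac{1+(2-\sqrt3)^{\sqrt d}}{1-(2-\sqrt3)^{\sqrt d}}\bigr)^d<2$, which you only verify numerically and whose margin is genuinely thin (maximum $\approx1.871$ near $d=2$). That inequality is true and can be closed elementarily --- substituting $t=\sqrt d\,\log(2+\sqrt3)$ it reads $t^2\log\coth(t/2)<\log(2+\sqrt3)^2\log 2\approx1.202$, and the expansion $\log\tfrac{1+x}{1-x}\le 2x+\tfrac{2x^3}{3(1-x^2)}$ together with $\max_t t^2e^{-t}=4e^{-2}$ gives an upper bound of about $1.107$ --- so there is no gap in principle, but as written this crux is a sketch, and it is precisely the step the paper's orthant decomposition is designed to avoid.
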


\begin{theorem}
\label{thm:DistanceBound}
Let $f$ be an exponential sum with $\mu = \mu(\Lambda)$. 
If $x \in \RR^d$ is of (Euclidean) distance at least 
\begin{equation}
\label{eqn:GeneralDegreeBound}
\delta \geq \frac{d\sqrt{d}}{\mu}\, 2\,\log\big(2+\sqrt{3}\big)
\end{equation}
from $\T(f)$, then $x \in \RR^d\setminus \A(f)$.
\end{theorem}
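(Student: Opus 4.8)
The plan is to reduce the non-vanishing of $f$ on the fiber over $x$ to a single \emph{lopsidedness} inequality, and then to estimate the resulting sum over the $\mu$-separated support. Writing $\hat f_k(x)=\log|c_k|+\<\lambda_k,x\>$, the hypothesis $\delta:=\operatorname{dist}(x,\T(f))>0$ forces $x\notin\T(f)$, so $\iota=\iota(x)$ is well defined. Since $\lambda_k$ is real, $\big|c_k e^{\<\lambda_k,x+iy\>}\big|=|c_k|e^{\<\lambda_k,x\>}$ is independent of $y$, and the reverse triangle inequality gives
\[
\inf_{y\in\RR^d}|f(x+iy)|\ \ge\ |c_\iota|e^{\<\lambda_\iota,x\>}\big(1-\Sigma\big),\qquad \Sigma:=\sum_{k\neq\iota}e^{\hat f_k(x)-\hat f_\iota(x)}.
\]
Thus it suffices to prove $\Sigma<1$.

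Second, I would convert the distance $\delta$ into lower bounds on the exponent gaps. For $k\ne\iota$ let $H_k=\{\,\hat f_\iota=\hat f_k\,\}$, a hyperplane with normal $\lambda_\iota-\lambda_k$; then $\hat f_\iota(x)-\hat f_k(x)=|\lambda_\iota-\lambda_k|\operatorname{dist}(x,H_k)$. Moving from $x$ along the segment to its orthogonal projection onto $H_k$, the function $\max_{j\ne\iota}\hat f_j-\hat f_\iota$ passes from negative to nonnegative, so by the intermediate value theorem the segment meets $\T(f)$; hence $\operatorname{dist}(x,H_k)\ge\delta$ and $\hat f_\iota(x)-\hat f_k(x)\ge\delta\,|\lambda_\iota-\lambda_k|$. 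Consequently
\[
\Sigma\ \le\ \sum_{k\neq\iota}e^{-\delta|\lambda_\iota-\lambda_k|},
\]
a sum over the translated support $\Lambda-\lambda_\iota$, whose points are pairwise $\mu$-separated and all of norm $\ge\mu$. This is exactly the reduction underlying Theorem~\ref{thm:PolynomialDistanceBound}: when $\Lambda\subset\ZZ^d$ the right-hand side is dominated by $\sum_{m\in\ZZ^d\setminus\{0\}}e^{-\delta|m|}$, and $\delta\ge d\log(2+\sqrt3)$ is precisely the threshold making this lattice sum $<1$.

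The tempting route for the general case is therefore to transport the $\mu$-separated configuration into a rescaled copy of $\ZZ^d$ by a linear map $T$ and invoke Theorem~\ref{thm:PolynomialDistanceBound}; since $T$ scales distances by at least its least singular value, and the cubic lattice of minimal distance $\mu$ has covering radius $\mu\sqrt d/2$, one is led to the factor $2\sqrt d/\mu$ appearing in \eqref{eqn:GeneralDegreeBound}. I expect this to be the main obstacle: a finite $\Lambda$ need not be commensurable, so no lattice contains it and no such exact $T$ exists; and even in the commensurable case rounding $\Lambda$ into the lattice introduces an additive error of the order of the covering radius $\sqrt d/2$, which inflates $\Sigma$ by a factor $e^{\Theta(\delta\sqrt d)}>1$ and defeats the estimate in high dimension.

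To circumvent this I would argue directly by a packing/volume comparison, never passing through a lattice. The open balls $B(\lambda_k,\mu/2)$, $k\ne\iota$, are pairwise disjoint and contained in $\{\,|y-\lambda_\iota|\ge\mu/2\,\}$. For $y\in B(\lambda_k,\mu/2)$ one has $|y-\lambda_\iota|\le|\lambda_k-\lambda_\iota|+\mu/2$, so averaging the bound $e^{-\delta|\lambda_k-\lambda_\iota|}\le e^{\delta\mu/2}e^{-\delta|y-\lambda_\iota|}$ over the ball and summing yields
\[
\Sigma\ \le\ \frac{e^{\delta\mu/2}}{\Vol\big(B(0,\mu/2)\big)}\int_{|u|\ge\mu/2}e^{-\delta|u|}\,du\ =\ e^{\delta\mu/2}\,d\,\frac{\Gamma(d,\delta\mu/2)}{(\delta\mu/2)^d}\ =\ \sum_{i=1}^{d}\frac{d!}{(d-i)!}\Big(\tfrac{\delta\mu}{2}\Big)^{-i}.
\]
Under the hypothesis \eqref{eqn:GeneralDegreeBound} we have $\delta\mu/2\ge d^{3/2}\log(2+\sqrt3)$, and it remains to verify the elementary inequality $\sum_{i=1}^d \frac{d!}{(d-i)!}\,s^{-i}<1$ for $s\ge d^{3/2}\log(2+\sqrt3)$, which holds for every $d\ge1$ (for large $d$ the true threshold is only $s\sim 2d$). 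This gives $\Sigma<1$ and hence $x\in\RR^d\setminus\A(f)$; I note in passing that the same computation in fact yields a bound linear in $d$, stronger than \eqref{eqn:GeneralDegreeBound}.
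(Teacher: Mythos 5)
Your proof is correct, but it takes a genuinely different route from the paper's. Both arguments share the same first two steps (lopsidedness plus the conversion of the distance hypothesis into $\hat f_\iota(x)-\hat f_k(x)\ge\delta|\lambda_\iota-\lambda_k|$, which is Lemma~\ref{lem:DistanceInequality}), and both must then bound $\Xi_\iota(\delta)=\sum_{k\ne\iota}e^{-\delta|\lambda_k-\lambda_\iota|}$ for a $\mu$-separated configuration. The paper does exactly the lattice reduction you dismiss, but makes it work: it rounds each $\lambda_k$ to a point of the \emph{finer} lattice $\Gamma=\tfrac{\mu}{2\sqrt d}\ZZ^d$ chosen inside the ball $B(\lambda_k,\mu/2)$ and \emph{toward the origin}, so that $|\hat\lambda_k|\le|\lambda_k|$ and $\Xi_\iota$ only increases; there is no additive covering-radius error of the kind you worry about, and the factor $2\sqrt d/\mu$ in \eqref{eqn:GeneralDegreeBound} is precisely the rescaling cost of passing to $\Gamma$ before invoking Theorem~\ref{thm:PolynomialDistanceBound}. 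Your packing/volume comparison replaces this entirely: the disjoint balls $B(\lambda_k,\mu/2)$ lie in $\{|u-\lambda_\iota|\ge\mu/2\}$, and averaging $e^{-\delta|\lambda_k-\lambda_\iota|}\le e^{\delta\mu/2}e^{-\delta|y-\lambda_\iota|}$ over each ball gives
\[
\Xi_\iota(\delta)\ \le\ e^{s}\,d\,\frac{\Gamma(d,s)}{s^{d}}\ =\ \sum_{i=1}^{d}\frac{d!}{(d-i)!}\,s^{-i},\qquad s=\tfrac{\delta\mu}{2},
\]
and since $\frac{d!}{(d-i)!}\le d^{i}$ the sum is below $\frac{d/s}{1-d/s}<1$ as soon as $s>2d$, which $s\ge d^{3/2}\log(2+\sqrt3)$ guarantees for $d\ge3$ (the cases $d=1,2$ are checked directly). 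What your approach buys is substantial: it is lattice-free, it applies verbatim to any $\mu$-separated (not necessarily commensurable) support, and it yields a threshold that is \emph{linear} in $d$ (roughly $\delta\ge 4d/\mu$ for large $d$), strictly stronger than the $d\sqrt d$ growth in \eqref{eqn:GeneralDegreeBound}; what it gives up is the tighter constant in low dimension that the lattice sum $\sum_{\beta\ne0}e^{-\delta|\beta|}$ provides in the polynomial case, and the structural link to the implicit sharp bounds of \S\ref{sec:ImplicitBounds}. You should only correct the parenthetical claim that the lattice route is defeated in high dimension -- it is not, as the paper's proof shows -- but this does not affect the validity of your own argument.
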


The bounds from Theorems \ref{thm:PolynomialDistanceBound} and \ref{thm:DistanceBound}
are not sharp, cf.\ Theorem \ref{thm:EPR1dim}.
Though, the above bounds implies the existence of sharp Fujiwara bounds,
which we denote $\Delta_d$ for polynomial exponential sums and $\hat \Delta_d(\mu)$
for general exponential sums.
In the case of polynomial exponential sums, the sharp bound $\Delta_d$
can be determined implicitly
as the unique positive zero of an explicit exponential series, see \S \ref{sec:ImplicitBounds}.
Such an implicit description of the sharp bound $\hat \Delta_d(\mu)$
is not know; see the discussion in \S \ref{ssec:Honeycomb}. 
It is clear that $\Delta _d \leq \hat \Delta_d (1)$, since polynomial exponential sums
is a special case of exponential sums. We will remark in \S \ref{ssec:Honeycomb} 
that this inequality is strict. Presumably, the extra factor of $\sqrt{d}$ appearing in \eqref{eqn:GeneralDegreeBound} is necessary.

In Theorem \ref{thm:EPR1dim} special attention is placed on the case when $\lambda$ is
a vertex of the Newton polytope of $f$. A similar analysis can be made in the general
case, however the bounds obtained are not significant improvements of \eqref{eqn:DegreeBound}
respectively \eqref{eqn:GeneralDegreeBound}, see Remark \ref{rem:VerticesBound}.

This note could be taken as a remark to the papers \cite{AKNR13} and \cite{EPR14},
where similar bounds were given in terms of the number of monomials $n$.
That \emph{fewnomial bound}, for general exponential sum, is of the form $\log(n)/\mu$.
Hence, our \emph{degree bound} implies that the fewnomial bound is sharp only if we allow for $d$
to be arbitrarily big. 
Consider, e.g., the case $d=2$.  For polynomials the sharp degree bound
(see Table~\ref{tab:dimension2}) is an improvement of the fewnomial bound
if $f$ has least eight terms.
The sharper bounds obtained through the methods presented in \S 2 yields,
still in the case $d=2$, an improvement of the fewnomial bound if $f$
has at least five terms.

%%%%%%%%%%%%%%%%%%%%%%%%%%%%%%%%%%%%%%%%%%
\section{Implicit bounds determined by $\Lambda$}%%%%%%%%%%%%
\label{sec:ImplicitBounds}%%%%%%%%%%%%%%%%%%%%%%
%%%%%%%%%%%%%%%%%%%%%%%%%%%%%%%%%%
In this section we will derive an implicit bound $\Delta(\Lambda)$ on the distance $\delta$ between a point in the amoeba $\A(f)$ and the Archimedean tropical variety $\T(f)$. We will work with a fix support set $\Lambda$. Our bound will be highly implicit. We will associate to $\Lambda$ a (finite) family of real univariate exponential sums, each with a unique positive root. The bound in question will be the maximum of these roots. It will be the task of later sections to make the bound explicit in the case of specific families of support sets $\Lambda$.

\begin{lemma}
\label{lem:DistanceInequality}
Assume that $x \in \RR^d$ is such that $x\in \RR^d\setminus \T(f)$
with $\iota = \iota(x)$.
Then, $x$ is of (Euclidean) distance at least $\delta > 0$
from $\T(f)$ if and only if for each $k$ we have that
\begin{equation}
\label{eqn:DistanceInequality}
|c_k| e^{\<\lambda_k, x\>} \leq |c_{\iota}| e^{\<\lambda_{\iota}, x\> - \delta |\lambda_k - \lambda_{\iota}|}.
\end{equation}
\end{lemma}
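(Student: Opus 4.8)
The plan is to read both sides of the equivalence geometrically through the polyhedral cell on which the index $\iota$ realises the tropical maximum, and to reduce the statement to an elementary fact about the distance from an interior point to the boundary of a convex polyhedron. Since $x\in\RR^d\setminus\T(f)$, the index $\iota=\iota(x)$ is the \emph{unique} maximiser, so $x$ lies in the topological interior of the closed cell
\[
C_\iota=\bigcap_{k\neq\iota}\bigl\{\,x'\in\RR^d : \log|c_\iota|+\langle\lambda_\iota,x'\rangle\ge\log|c_k|+\langle\lambda_k,x'\rangle\,\bigr\},
\]
a convex polyhedron whose bounding hyperplanes are the separators $H_k=\{\,x':\langle\lambda_\iota-\lambda_k,x'\rangle=\log|c_k|-\log|c_\iota|\,\}$ for $k\neq\iota$. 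Note that $\partial C_\iota\subset\T(f)$, since on $\partial C_\iota$ the index $\iota$ still attains the maximum but now ties with some active $k$.

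First I would recast the inequality \eqref{eqn:DistanceInequality} as a statement about $\mathrm{dist}(x,H_k)$. Taking logarithms, \eqref{eqn:DistanceInequality} for a fixed $k$ is equivalent to
\[
\langle\lambda_\iota-\lambda_k,x\rangle+\log|c_\iota|-\log|c_k|\ \ge\ \delta\,|\lambda_\iota-\lambda_k|.
\]
The left-hand side is nonnegative because $x$ lies strictly inside the half-space defining $C_\iota$ for that $k$, and after dividing by $|\lambda_\iota-\lambda_k|$ it is exactly the Euclidean distance from $x$ to $H_k$. Hence \eqref{eqn:DistanceInequality} holds for every $k$ if and only if $\mathrm{dist}(x,H_k)\ge\delta$ for every $k\neq\iota$ (the case $k=\iota$ being vacuous).

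The crux is then the identity $\mathrm{dist}(x,\T(f))=\min_{k\neq\iota}\mathrm{dist}(x,H_k)$, which I would establish in two convexity-driven steps. The first is that the nearest point of $\T(f)$ to $x$ already lies on $\partial C_\iota$: if $y\in\T(f)$ were outside $C_\iota$, then the segment $[x,y]$ from the interior point $x$ would cross $\partial C_\iota\subset\T(f)$ at a strictly closer point, so $\mathrm{dist}(x,\T(f))=\mathrm{dist}(x,\partial C_\iota)$. The second is the standard fact that, for an interior point of a polyhedron presented as an intersection of half-spaces, the distance to the boundary equals the least of the distances to the bounding hyperplanes: the open ball of radius $r$ about $x$ is contained in $C_\iota$ precisely when $r\le\mathrm{dist}(x,H_k)$ for all $k$, so the inradius at $x$ is $\min_{k\neq\iota}\mathrm{dist}(x,H_k)$. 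Combining the two yields the identity, and the lemma follows from the reformulation of the previous paragraph.

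I expect the main obstacle to be the first of these two steps — justifying that the distance to the \emph{entire} Archimedean tropical variety collapses to the minimum over the individual separators $H_k$. A priori the closest point of $\T(f)$ might sit on a wall between two indices both different from $\iota$, or the foot of the perpendicular from $x$ to some $H_k$ might land where a third index strictly dominates, so that this foot is not in $\T(f)$ at all. Convexity of $C_\iota$ dispels both concerns simultaneously: leaving $C_\iota$ can only increase the distance, and the half-spaces need not be trimmed to their active faces when computing the inradius. The remaining passages between \eqref{eqn:DistanceInequality} and the hyperplane-distance inequality are routine.
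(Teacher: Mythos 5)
Your proof is correct and takes essentially the same route as the paper's: both reduce \eqref{eqn:DistanceInequality} to the statement that the Euclidean distance from $x$ to the separating hyperplane $H_k=\{\ell_\iota=\ell_k\}$ is at least $\delta$ (the paper does this by writing the foot of the perpendicular as $x+\xi(\lambda_k-\lambda_\iota)$, you by dividing the logarithmic form of the inequality by $|\lambda_\iota-\lambda_k|$). The identity $\mathrm{dist}(x,\T(f))=\min_{k\neq\iota}\mathrm{dist}(x,H_k)$, which the paper leaves implicit, is exactly the point you isolate and justify via convexity of $C_\iota$, so your write-up is if anything the more complete of the two.
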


\begin{proof}
The closest point $y$ to $x$ on the line defined by the equation
\[
|c_k| e^{\<\lambda_k, y\>} =  |c_{\iota}| e^{\<\lambda_{\iota}, y\>}
\]
is of the form $y = x + \xi (\lambda_k - \lambda_\iota)$. It follows that 
\[
|c_k| e^{\<\lambda_k, x\>} =  |c_{\iota}| e^{\<\lambda_{\iota}-\lambda_k, y\>+ \<\lambda_k, x\>} 
= |c_{\iota}| e^{ \<\lambda_{\iota}, x\>-\xi|\lambda_k - \lambda_\iota|^2}.
\]
Hence, the lemma follows from the observation that the inequality
\[
|c_{\iota}| e^{ \<\lambda_{\iota}, x\>-\xi|\lambda_k - \lambda_\iota|^2} 
\leq |c_{\iota}| e^{ \<\lambda_{\iota}, x\>-\delta|\lambda_k - \lambda_\iota|}.
\]
is equivalent to the inequality $|\xi(\lambda_k - \lambda_\iota)| \geq \delta$.
\end{proof}

\begin{definition}
For a fix support set $\Lambda$ and index $\iota$, we define the \emph{characteristic 
exponential sum} $\Xi_\iota(\delta)$ by
\[
\Xi_\iota(\delta) = \sum_{k\neq \iota} e^{-\delta |\lambda_k-\lambda_\iota|}.
\]
\end{definition} 

Notice that even for $\Lambda \subset \ZZ^d$ the characteristic function is in general
not polynomial. For this reason the natural setting for the problem under consideration
is that of exponential sums.

\begin{theorem}
\label{thm:DistanceImpliesComplement}
Assume that $x \in \RR^d$ is of (Euclidean) distance at least $\delta > 0$
from $\T(f)$. Let $\iota = \iota(x)$. If, in addition, $\Xi_\iota(\delta) < 1$,
then $x \in \RR^d\setminus \A(f)$.
\end{theorem}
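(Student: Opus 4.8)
The plan is to show directly that $f(x+iy) \neq 0$ for every $y \in \RR^d$, which is exactly the statement that $x \notin \Re(V(f)) = \A(f)$. The argument is the multivariate incarnation of Fujiwara's triangle-inequality trick: the summand indexed by $\iota = \iota(x)$ dominates, and the hypothesis $\Xi_\iota(\delta) < 1$ guarantees that the remaining summands are collectively too small to force a cancellation, uniformly in $y$.

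First I would write, for a fixed $y \in \RR^d$,
\[
f(x+iy) = \sum_{k=0}^n c_k\, e^{\<\lambda_k, x\>}\, e^{i\<\lambda_k, y\>},
\]
and single out the term $k = \iota$. Since $|e^{i\<\lambda_k, y\>}| = 1$ for every $k$, the reverse triangle inequality gives the $y$-independent lower bound
\[
|f(x+iy)| \;\geq\; |c_\iota|\, e^{\<\lambda_\iota, x\>} \;-\; \sum_{k \neq \iota} |c_k|\, e^{\<\lambda_k, x\>}.
\]
Next I would control the subtracted sum using the hypothesis that $x$ is of distance at least $\delta$ from $\T(f)$: by the forward implication of Lemma~\ref{lem:DistanceInequality}, each term obeys $|c_k|\, e^{\<\lambda_k, x\>} \leq |c_\iota|\, e^{\<\lambda_\iota, x\> - \delta |\lambda_k - \lambda_\iota|}$, so that
\[
\sum_{k \neq \iota} |c_k|\, e^{\<\lambda_k, x\>} \;\leq\; |c_\iota|\, e^{\<\lambda_\iota, x\>} \sum_{k \neq \iota} e^{-\delta |\lambda_k - \lambda_\iota|} \;=\; |c_\iota|\, e^{\<\lambda_\iota, x\>}\, \Xi_\iota(\delta).
\]
Combining the two displays yields $|f(x+iy)| \geq |c_\iota|\, e^{\<\lambda_\iota, x\>}\bigl(1 - \Xi_\iota(\delta)\bigr)$, whose right-hand side is strictly positive by the assumption $\Xi_\iota(\delta) < 1$ and is independent of $y$. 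Hence $f$ admits no zero with real part equal to $x$.

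Because the geometric content has already been packaged into Lemma~\ref{lem:DistanceInequality} and into the definition of $\Xi_\iota$, I do not expect any serious obstacle; the theorem is essentially a corollary of the lemma. The one point needing (minor) care is that the final estimate must hold simultaneously for all $y \in \RR^d$, which it does precisely because each $e^{i\<\lambda_k, y\>}$ has modulus one, so passing to the imaginary directions can only shrink moduli and can never aid a would-be cancellation. I would also remark that $\Xi_\iota(\delta) < 1$ is invoked only as a strict inequality to force nonvanishing; the work of the later sections is then to exhibit, from the combinatorics of $\Lambda$, an explicit $\delta$ for which $\Xi_\iota(\delta) < 1$ holds for every admissible index $\iota$.
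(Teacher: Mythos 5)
Your argument is correct and coincides with the paper's own proof: both apply the reverse triangle inequality to isolate the term indexed by $\iota$, then invoke Lemma~\ref{lem:DistanceInequality} to bound the remaining sum by $|c_\iota|\,e^{\<\lambda_\iota,x\>}\,\Xi_\iota(\delta)$, concluding $|f(z)|>0$ for every $z$ with $\Re(z)=x$. Your write-up simply makes explicit the uniformity in $y$ that the paper leaves implicit.
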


\begin{proof}
Using Lemma \ref{lem:DistanceInequality} we have that
\[
|f(z)| \geq  |c_{\iota}|\,e^{\< \lambda_\iota , x\>} - \sum_{k\neq \iota} |c_k| \,e^{\<\lambda_k,x\>}
\geq  |c_{\iota}|\,e^{\< \lambda_\iota , x\>}\left( 1 - \Xi_\iota(\delta)\right) >  0
\]
for any $z$ with $\Re(z) = x$.
\end{proof}

\begin{remark}
The function $\Xi_\iota$ is a strictly decreasing function for $\delta \in (0, \infty)$ such that, firstly, $\Xi_\iota(0) = n$ and, secondly, $\Xi_\iota(\delta) \rightarrow 0$ as $\delta \rightarrow \infty$. It follows that
the equation $\Xi_\iota(\delta) = 1$ has a unique positive solution which we denote by $\delta_\iota$.
The assumption
that $\Xi_\iota(\delta) < 1$ in Theorem \ref{thm:DistanceImpliesComplement} holds for any
$\delta > \delta_\iota$.
\end{remark}

\begin{definition}
Let $\Lambda$ be a support set. We define the \emph{$\Lambda$-distance bound}
$\Delta(\Lambda)$ to be the positive real number $\max(\delta_0, \dots, \delta_n)$.
\end{definition}

\begin{theorem}
Let $f$ be an exponential sum with support set $\Lambda$.
Assume that $x \in \RR^d$ is of (Euclidean) distance at least $\delta > \Delta(\Lambda)$
from $\T(f)$. Then, $x \in \RR^d\setminus \A(f)$.
\end{theorem}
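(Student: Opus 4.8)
The plan is to recognize this statement as a direct corollary of Theorem~\ref{thm:DistanceImpliesComplement}, together with the monotonicity of the characteristic exponential sums recorded in the preceding remark and the definition of $\Delta(\Lambda)$; no genuinely new estimate is required. The only conceptual point is that the index $\iota = \iota(x)$ is determined by $x$ and cannot be prescribed in advance, so the pointwise threshold $\delta_\iota$ must be replaced by its worst case $\Delta(\Lambda) = \max(\delta_0,\dots,\delta_n)$.

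I would proceed in three short steps. First, I would note that $\Delta(\Lambda)$ is the maximum of the finitely many strictly positive numbers $\delta_0,\dots,\delta_n$, each of which is the unique positive root of $\Xi_k(\delta)=1$ furnished by the remark; hence $\delta > \Delta(\Lambda) > 0$. Since the distance from $x$ to $\T(f)$ is at least $\delta > 0$, the point $x$ does not lie on $\T(f)$, so the index $\iota = \iota(x)$ is well defined. Second, for this particular index I would observe that $\delta_\iota \leq \Delta(\Lambda) < \delta$; because $\Xi_\iota$ is strictly decreasing with unique positive root $\delta_\iota$, this forces $\Xi_\iota(\delta) < 1$. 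Third, both hypotheses of Theorem~\ref{thm:DistanceImpliesComplement} are now in place: $x$ is of distance at least $\delta > 0$ from $\T(f)$, and $\Xi_\iota(\delta) < 1$. Invoking that theorem yields $x \in \RR^d \setminus \A(f)$, as desired.

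There is no substantive obstacle here; the analytic work has already been carried out in Lemma~\ref{lem:DistanceInequality} and Theorem~\ref{thm:DistanceImpliesComplement}. The single subtlety worth flagging is precisely the $x$-dependence of $\iota$: one has no a priori control over which term of $f$ dominates at $x$, which is exactly why $\Delta(\Lambda)$ is defined as a maximum over all admissible indices rather than as a single threshold $\delta_\iota$. Taking this maximum guarantees the inequality $\Xi_{\iota(x)}(\delta) < 1$ uniformly, whatever value $\iota(x)$ happens to take.
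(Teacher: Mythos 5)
Your argument is correct and is exactly the paper's proof, which simply cites the definition of $\Delta(\Lambda)$ together with Theorem~\ref{thm:DistanceImpliesComplement}; your version just spells out the monotonicity of $\Xi_\iota$ and the well-definedness of $\iota(x)$ that the paper leaves implicit. No differences worth noting.
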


\begin{proof}
This follows from the definition of $\Delta(\Lambda)$ and Theorem \ref{thm:DistanceImpliesComplement}.
\end{proof}

%%%%%%%%%%%%%%%%%%%%%%%%%%%%%%%%%%%%%%%%%%
\section{Lopsidedness}%%%%%%%%%%%%%%%%%%%%%
%%%%%%%%%%%%%%%%%%%%%%%%%%%%%%%%%%

We will, in this section, work towards a converse result of Theorem \ref{thm:DistanceImpliesComplement}, which will be crucial to investigate the sharpness of 
the bounds from Theorems \ref{thm:PolynomialDistanceBound} and \ref{thm:DistanceBound}.

\begin{definition}
\label{def:lopsidedness}
The exponential sum \eqref{eqn:f} is said to be \emph{lopsided} at $x\in \RR^d$
with respect to $\iota$ if
\[
|c_{\iota}|\,e^{\< \lambda_\iota , x\>} > \sum_{k\neq \iota} |c_k| \,e^{\<\lambda_k,x\>}.
\]
Further more, $f$ is said to be lopsided at $x$ if there exists an index $\iota$ such that $f$ is
lopsided at $x$ with respect to $\iota$. The set of points $x$ such that $f$ is \emph{not} lopsided 
is called the \emph{lopsided amoeba} of $f$ and is denoted $\L(f)$.
\end{definition}

Upon examination of the proof of Theorem \ref{thm:DistanceImpliesComplement} the crucial property
is that $f(z)$ is lopsided at $x$ with respect to $\iota$. The inclusion $\A(f) \subset \L(f)$ follows from, e.g., \cite{For15}. Hence, lopsidedness implies that $x\in \RR^d \setminus \A(f)$. The lopsided amoeba admits several equivalent definitions, the most useful for our purposes is given in the following proposition.

\begin{proposition}
\label{pro:lopsidedness}
Let $\theta \in (S^1)^{n+1}$, and let
$f_\theta(z) = \sum_{k=0}^n c_k e^{i\theta_k}\,e^{\<\lambda_k,z\>}$.
Then, 
\[
\L(f) = \bigcup_{\theta}\,\A(f_\theta).
\]
\end{proposition}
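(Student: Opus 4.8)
The plan is to prove the asserted equality by double inclusion, after translating both sides into statements about the nonnegative real numbers $r_k := |c_k|\,e^{\<\lambda_k,x\>}$ for $k=0,\dots,n$. Observe that membership $x\in\A(f_\theta)$ means exactly that $f_\theta$ has a zero $z$ with $\Re(z)=x$, while non-lopsidedness of $f$ at $x$ is, by Definition~\ref{def:lopsidedness}, precisely the system of inequalities $r_\iota\le\sum_{k\neq\iota}r_k$ holding for \emph{every} index $\iota$. So the whole statement will reduce to a classical planar fact about vectors of prescribed lengths, which I isolate below as the crux of the argument.

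For the inclusion $\bigcup_\theta\A(f_\theta)\subseteq\L(f)$, suppose $x\in\A(f_\theta)$ for some $\theta$, witnessed by a zero $z=x+iy$ of $f_\theta$. Fixing any $\iota$ and isolating its term in the relation $f_\theta(z)=0$ gives $c_\iota e^{i\theta_\iota}e^{\<\lambda_\iota,z\>}=-\sum_{k\neq\iota}c_k e^{i\theta_k}e^{\<\lambda_k,z\>}$. Taking moduli and using $|e^{\<\lambda_k,z\>}|=e^{\<\lambda_k,x\>}$, the triangle inequality yields $r_\iota\le\sum_{k\neq\iota}r_k$. Since $\iota$ was arbitrary, $f$ is lopsided at $x$ with respect to no index, i.e.\ $x\in\L(f)$.

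For the reverse inclusion $\L(f)\subseteq\bigcup_\theta\A(f_\theta)$, I would fix $x\in\L(f)$ and seek the witnessing zero at the \emph{real} point $z=x$ itself; the freedom to rotate each coefficient by a phase $e^{i\theta_k}$ removes any need to move into the imaginary directions. Non-lopsidedness supplies the inequalities $r_\iota\le\sum_{k\neq\iota}r_k$ for all $\iota$, and I claim these guarantee the existence of unit complex numbers $\phi_0,\dots,\phi_n\in S^1$ with $\sum_k r_k\phi_k=0$. Granting the claim, choose $\theta_k$ so that $c_k e^{i\theta_k}=|c_k|\,\phi_k$ (possible since both sides are nonnegative multiples of unit vectors, and any $k$ with $c_k=0$ forces $r_k=0$ and is irrelevant). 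Then $f_\theta(x)=\sum_k c_k e^{i\theta_k}e^{\<\lambda_k,x\>}=\sum_k r_k\phi_k=0$, so $x\in\A(f_\theta)$, as desired.

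The main obstacle is exactly the isolated claim: nonnegative reals $r_0,\dots,r_n$ occur as the edge lengths of a closed planar polygon, equivalently $\sum_k r_k\phi_k=0$ for some $\phi_k\in S^1$, if and only if no $r_\iota$ exceeds the sum of the remaining $r_k$. The forward direction is the triangle inequality already used above; the substantive content is the converse. I would establish it by induction on $n$: in the base case $n=1$ the hypothesis forces $r_0=r_1$ and one closes with $\phi_0=1,\phi_1=-1$; for the inductive step one merges the two shortest lengths $r_{n-1}\ge r_n$ into a single edge of adjustable length $\rho$ ranging over $[\,r_{n-1}-r_n,\ r_{n-1}+r_n\,]$, and checks that $\rho$ can be chosen so that the merged $n$-term system again satisfies all polygon inequalities, after which induction applies and the merged edge is split back apart. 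The only delicate point is verifying that the admissible interval for $\rho$ is nonempty, which is elementary bookkeeping from the inherited inequality; alternatively one may argue that the image of the continuous map $(\phi_k)\mapsto\big|\sum_k r_k\phi_k\big|$ on the connected torus $(S^1)^{n+1}$ is an interval whose minimum is $0$ precisely under the hypothesis. This reduces the entire proposition to a one-dimensional argument.
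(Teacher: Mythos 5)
Your proof is correct and follows essentially the same route as the paper: both sides are translated into the existence of a closed planar polygon (equivalently, unit vectors $\phi_k$ with $\sum_k r_k\phi_k=0$) with prescribed edge lengths $r_k=|c_k|e^{\<\lambda_k,x\>}$, and the equivalence with the system of inequalities $r_\iota\le\sum_{k\neq\iota}r_k$ is the classical polygon criterion. The only difference is one of detail: the paper invokes that criterion without proof, whereas you supply the triangle-inequality direction explicitly and sketch a complete inductive (or connectedness) argument for the converse.
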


\begin{proof}
By definition, the exponential sum is lopsided at $x$ if and only if there is no convex polygon with
side lengths $|c_0|e^{\<\lambda_0, x\>}, \dots, |c_n|e^{\<\lambda_n, x\>}$.
On the other hand, $x \in \A(f_\theta)$ if and only if there exists $z\in \CC$ with $\Re(z) = x$ such that
the complex numbers 
$c_0 e^{i\theta_0}\,e^{\<\lambda_0,z\>}, \dots, c_0 e^{i\theta_0}\,e^{\<\lambda_0,z\>}$,
viewed as vectors in $\RR^2$, forms the boundary of a convex polygon.
\end{proof}

We are concerned, in this paper, with approximations of the amoeba $\A(f)$ that only depends
on the norms of the coefficients $c_k$ for $k= 0, \dots, n$. The
lopsided amoeba is the universal such approximation. Indeed, 
Proposition \ref{pro:lopsidedness} states that if $x\in \L(f)$, then there exists a $\theta\in (S^1)^{n+1}$
such that $x \in \A(f_\theta)$. 

\begin{theorem}
\label{thm:DistanceConverse}
Let $\delta>0$ be such that $\Xi_\iota(\delta) \geq 1$. Then, there exists an exponential sum
$f$ with support $\Lambda$ and a point $x\in \RR^d$ such that, firstly, $x$ is of distance $\delta$ from
$\T(f)$, secondly, $\iota = \iota(x)$, and thirdly $x\in \A(f)$.
\end{theorem}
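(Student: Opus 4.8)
The plan is to realize the equality case of Theorem~\ref{thm:DistanceImpliesComplement} by an explicit construction, exploiting the freedom to choose both the moduli and the arguments of the coefficients $c_k$. Since only the existence of \emph{some} $f$ and \emph{some} $x$ is required, I would fix the point to be the origin, $x = 0$, and prescribe the moduli
\[
|c_\iota| = 1, \qquad |c_k| = e^{-\delta|\lambda_k - \lambda_\iota|} \quad (k \neq \iota),
\]
leaving the arguments to be pinned down in the last step. Because $\delta > 0$ and $|\lambda_k - \lambda_\iota| > 0$ for $k \neq \iota$, we have $|c_k| < 1 = |c_\iota|$, so the maximum in $\hat f(0)$ is attained uniquely at $\iota$; hence $0 \in \RR^d \setminus \T(f)$ and $\iota(0) = \iota$, giving the second requirement immediately.

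Next I would verify that $0$ is of distance exactly $\delta$ from $\T(f)$, using Lemma~\ref{lem:DistanceInequality} in both directions. The chosen moduli make \eqref{eqn:DistanceInequality} hold with equality for every $k$, so the lemma gives $d(0, \T(f)) \geq \delta$. Conversely, for any $\delta' > \delta$ the inequality \eqref{eqn:DistanceInequality} (with $\delta'$ in place of $\delta$) is strictly violated for each $k \neq \iota$, so by the same lemma $0$ is not of distance at least $\delta'$ from $\T(f)$; letting $\delta' \downarrow \delta$ yields $d(0, \T(f)) \leq \delta$. Together these give $d(0, \T(f)) = \delta$, the first requirement. (If desired, the upper bound can instead be seen explicitly: the foot of the perpendicular from $0$ to the bisector hyperplane $\{\,\log|c_k| + \<\lambda_k, y\> = \log|c_\iota| + \<\lambda_\iota, y\>\,\}$ has norm $\delta$, and a single Cauchy--Schwarz estimate shows it lies on $\T(f)$.)

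The heart of the matter is the third requirement, $0 \in \A(f)$, and this is where the choice of arguments enters. Evaluating the moduli at $x = 0$ gives
\[
\sum_{k \neq \iota} |c_k|\, e^{\<\lambda_k, 0\>} = \sum_{k \neq \iota} e^{-\delta|\lambda_k - \lambda_\iota|} = \Xi_\iota(\delta) \geq 1 = |c_\iota|\, e^{\<\lambda_\iota, 0\>}.
\]
Since $|c_\iota| = 1$ is the largest of the numbers $|c_k|\, e^{\<\lambda_k,0\>}$, this shows that no one of these numbers exceeds the sum of the others; that is, they are the side lengths of a (possibly degenerate) convex polygon, so $f$ is not lopsided at $0$ and $0 \in \L(f)$. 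By Proposition~\ref{pro:lopsidedness}, $\L(f) = \bigcup_\theta \A(f_\theta)$, so there exists $\theta \in (S^1)^{n+1}$ with $0 \in \A(f_\theta)$. I would then take $f_\theta$ as the sought exponential sum: passing from $f$ to $f_\theta$ alters only the arguments of the coefficients and leaves every modulus $|c_k|$ unchanged, hence it preserves $\hat f$, the tropical variety $\T(f)$, the distance $\delta$, and the index $\iota(0) = \iota$, while now placing $0$ in the amoeba.

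The main obstacle is precisely the two gaps between this converse and Theorem~\ref{thm:DistanceImpliesComplement}: the gap between ``distance at least $\delta$'' and ``distance exactly $\delta$'', and the gap between the lopsided amoeba $\L(f)$ and the genuine amoeba $\A(f)$. The first is closed by exploiting that Lemma~\ref{lem:DistanceInequality} is an ``if and only if'', applied both at $\delta$ and at $\delta' > \delta$; the second is exactly the purpose of the phase freedom supplied by Proposition~\ref{pro:lopsidedness}. One point to treat with care is the boundary case $\Xi_\iota(\delta) = 1$, in which the polygon degenerates to a segment; the characterization of $\L(f)$ is by non-strict inequalities, so this case remains covered and the corresponding $\theta$ still yields $0 \in \A(f_\theta)$.
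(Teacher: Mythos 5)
Your proposal is correct and follows essentially the same route as the paper: choose the moduli so that equality holds in \eqref{eqn:DistanceInequality} (forcing $\iota(x)=\iota$ and distance exactly $\delta$ to $\T(f)$), observe that the resulting sum equals $\Xi_\iota(\delta)\geq 1$ so that $x\in\L(f)$, and then invoke Proposition~\ref{pro:lopsidedness} to select arguments placing $x$ in $\A(f)$. Your extra care with the two-sided distance verification and the degenerate-polygon case $\Xi_\iota(\delta)=1$ only makes explicit what the paper leaves implicit.
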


\begin{proof}
Fix an arbitrary point $x \in \RR^d$.
Set $|c_\iota| = 1$, and for each $k  \neq \iota$ define $|c_k|$ so that equality holds in \eqref{eqn:DistanceInequality}. By construction we have that $\iota(x) = \iota$. It follows from Lemma \ref{lem:DistanceInequality} that,
for any choice of arguments of the coefficients $c_k$ for $k= 0, \dots, n$, we have that
$x$ is of distance $\delta$ from $\T(f)$. Further more, by construction we have that
\[
\sum_{k\neq \iota} |c_k| \,e^{\<\lambda_k,x\>} = 
|c_{\iota}|\,e^{\< \lambda_\iota , x\>}\, \Xi_\iota(\delta) \geq  |c_{\iota}|\,e^{\< \lambda_\iota , x\>},
\]
and hence, for any choice of arguments of $c_k$ for $k=0, \dots, n$, it holds
that  $x\in \L(f)$. In particular, by Proposition \ref{pro:lopsidedness}, we can choose the
arguments of the coefficients $c_k$ for $k=0, \dots, n$ such that $x \in \A(f)$.
\end{proof}

%%%%%%%%%%%%%%%%%%%%%%%%%%%%%%%%%%%%%%%%%%
\section{Polynomials}%%%%%%%%%%%%%%%%%%%%%
%%%%%%%%%%%%%%%%%%%%%%%%%%%%%%%%%%
The aim of this section is to explore upper bounds on $\Delta(\Lambda)$ when $\Lambda\subset\ZZ^d$. The proofs in this section are based on the following simple remarks related to the function $\Xi_\iota(\delta)$. Firstly, we have that $\lambda_k - \lambda_\iota \in \ZZ^d$. In particular, $|\lambda_k - \lambda_\iota|\geq 1$ if $k \neq \iota$. Secondly, the root $\delta_\iota$ increases if we act on $\Xi_\iota(\delta)$ by decreasing the magnitude $|\lambda_k - \lambda_\iota|$ for some $k$ (by shifting $\lambda_k$).

\begin{proof}[Proof of Theorem \ref{thm:PolynomialDistanceBound}]
Assume that $x\in \RR^d$ is of distance at least $\delta$ from $\T(f)$. Let $\iota = \iota(x)$, which is
well defined. We can assume without loss of generality that $\lambda_\iota = 0$. Thus,
\[
\Xi_0(\delta) = \sum_{k\neq \iota} e^{-\delta|\lambda_k|} < \sum_{\beta \neq 0} e^{-\delta|\beta|}
\]
where the rightmost sum is taken over all $\beta \in \ZZ^d$ such that $\beta \neq 0$.

In order to evaluate the latter sum we will subdivide $\ZZ^d \setminus 0$ into equivalent classes
defined by the component-wise sign function. There are ${d \choose m}$ ways of choosing
$m$ non-zero components of $\beta$, and there are $2^m$ ways of distributing signs among
those components. As we exclude the point $\beta = 0$ from the sum, we have that $m \geq 1$. 
For each choice as above we obtain a sum
\[
\sum_{\gamma\in \NN_+^m} e^{-\delta|\gamma|} <  \sum_{\gamma \in \NN_+^m} e^{-\frac{\delta}{m}\left(|\gamma_1|+ \dots + |\gamma_m|\right)},
\]
where the majorizing series is obtained by use of the inequality of arithmetic and geometric means.
All in all, with $\delta$ fulfilling \eqref{eqn:DegreeBound},
\[
\Xi_0(\delta)
< \sum_{m=1}^d{d \choose m}\,2^m\, \sum_{\gamma \in \NN_+^m} e^{-\frac{\delta}{m}\left(|\gamma_1|+ \dots + |\gamma_m|\right)}
 = e^{-\delta}\sum_{m=1}^d{d \choose m}\, \left(\frac{2}{1- e^{-\frac\delta m}}\right)^m
 < 1-\frac{1}{(2+\sqrt{3})^d}.
\]
Hence, by Theorem \ref{thm:DistanceImpliesComplement}, we have that $x \in \RR^d \setminus \A(f)$.
\end{proof}

\begin{remark}
As mentioned above, the bound \eqref{eqn:DegreeBound} of Theorem \ref{thm:PolynomialDistanceBound} is 
not sharp for any dimension $d$. When $d = 1$ it can be seen from Theorems \ref{thm:DistanceImpliesComplement}
and \ref{thm:DistanceConverse} that
\[
\Xi_0(\delta) < \sum_{k \in \ZZ^*} e^{-\delta|k|}  =\frac{2e^{-\delta}}{1-e^{-\delta}},
\]
which is less than or equal to one if and only if $\delta \geq \log(3)$.
For general $d$, to obtain a sharp bound, one should solve for $\delta$ the equation
\begin{equation}
\label{eqn:DegreeImplicitEquation}
\sum_{\beta \neq 0} e^{-\delta|\beta|}  = 1,
\end{equation}
where the sum is taken over all non-zero $\beta \in \ZZ^d$.
\end{remark}

\begin{remark}
\label{rem:VerticesBound}
The original Fujiwara bound is concerned with the norm of the largest root,
as in part $b)$ of Theorem~\ref{thm:EPR1dim}.
Similarly, the bounds from Theorems \ref{thm:PolynomialDistanceBound}
and \ref{thm:DistanceBound} can be sharpened if we consider only the case
when $\lambda_\iota$ is a vertex of the Newton polytope of $f$.
For such a vertex, we have that $\Lambda$ is strictly contained in a 
halfplane passing trough $\lambda_\iota$.
Therefor, it suffices to solve the equation given by replacing the right hand side of
\eqref{eqn:DegreeImplicitEquation} by two.
This does not yield a significant improvement. Applying the same method as in the proof of
Theorem~\ref{thm:PolynomialDistanceBound} we obtain the bound
\[
\delta \geq - d \, \log\left(\frac{3+\sqrt[d]{2}}{2} - \sqrt{\left(\frac{3+\sqrt[d]{2}}{2}\right)^2 - \sqrt[d]{2}}\right),
\]
where we note that the expression inside the logarithm tends to $1/(2+\sqrt{3})$ as $d$ tends to infinity.
\end{remark}

%%%%%%%%%%%%%
\subsection{The case $d=2$.}
Let us briefly discuss the case $d=2$. The bound from Theorem \ref{thm:PolynomialDistanceBound}
is $2\log\left(2+\sqrt{3}\right)$. This bound is far from sharp; an improved bound can be obtained from the proof
of that theorem using that
\[
|\beta| \geq \max(\beta_1, \beta_2).
\]
Which yields that $x \in \RR^2 \setminus \A(f)$ is assured as long as $x$ is of distance 
at least 
\[
\delta \geq \log\left(\frac{\sqrt{3}+\sqrt{2}}{\sqrt{3}-\sqrt{2}}\right)
\]
from $\T(f)$. 
Numerical computations, using the software \textsc{Mathematica},
suggests that the sharp bound is approximately $1.99508\dots$.
A comparison between these bounds can be found in Table~\ref{tab:dimension2}.

\begin{table}
\begin{center}
\renewcommand*\arraystretch{1.4}
\begin{tabular}{llcl|l}
\hline \hline
Theorem \ref{thm:PolynomialDistanceBound} 
	& $2\log\left(2+\sqrt{3}\right)$ & = & $2.63391\dots$ & 2.11239\dots\\
Improvement 
	& $\log\left(\frac{\sqrt{3}+\sqrt{2}}{\sqrt{3}-\sqrt{2}}\right)$ & = & $2.29243\dots$ &\\
Numerics
	& & & $1.99508\dots$ & 1.53538\dots\\
\hline \hline
\end{tabular}
\end{center}
\vskip10pt
\caption{A comparison of distance bounds for polynomial exponential sums in the case $d=2$.
To the right are sharpened bounds for the case when $\lambda$ is a vertex.}
\label{tab:dimension2}
\end{table}

%%%%%%%%%%%
\subsection{A lower bound on $\Delta_d$.}
\label{ssec:PolynomialLowerBound}
Recall that $\Delta_d$ denotes the sharp degree bound. 

\begin{proposition}
\label{pro:LowerPolynomialBound}
We have that $\Delta_d \geq \sqrt{d}\,\log(3)$.
\end{proposition}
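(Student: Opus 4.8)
The plan is to establish the lower bound $\Delta_d \geq \sqrt{d}\,\log(3)$ by exhibiting, for each dimension $d$, a support set $\Lambda \subset \ZZ^d$ together with an index $\iota$ for which the root $\delta_\iota$ of the characteristic equation $\Xi_\iota(\delta) = 1$ is at least $\sqrt{d}\,\log(3)$. By Theorem~\ref{thm:DistanceConverse}, once such a $\delta$ with $\Xi_\iota(\delta) \geq 1$ is produced, there exists an exponential sum $f$ with support $\Lambda$ and a point $x \in \A(f)$ at distance exactly $\delta$ from $\T(f)$; this certifies that the sharp bound $\Delta_d$ cannot be smaller than $\delta$.

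The natural construction is to take the support set to be the origin together with its nearest lattice neighbors along a single distinguished direction, so that the geometry reduces to the one-dimensional case after rescaling. Concretely, I would set $\iota = 0$ (so $\lambda_\iota = 0$) and choose $\Lambda$ so that the nonzero exponents $\lambda_k$ all lie along the diagonal direction, i.e.\ take $\lambda_k = k\,(1,1,\dots,1)$ for $k \in \ZZ^*$ (truncated to a finite set, or passing to the limit). Then $|\lambda_k - \lambda_0| = |k|\,\sqrt{d}$, and the characteristic sum becomes
\[
\Xi_0(\delta) = \sum_{k \neq 0} e^{-\delta\,|k|\,\sqrt{d}} = \frac{2\,e^{-\delta\sqrt{d}}}{1 - e^{-\delta\sqrt{d}}}.
\]
This is precisely the univariate characteristic sum evaluated at the rescaled argument $\delta\sqrt{d}$. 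From the $d=1$ computation recorded in the remark following the proof of Theorem~\ref{thm:PolynomialDistanceBound}, the equation $\Xi_0(\delta)=1$ now reads $2e^{-\delta\sqrt{d}} = 1 - e^{-\delta\sqrt{d}}$, whose unique positive solution satisfies $e^{\delta\sqrt{d}} = 3$, that is, $\delta = \sqrt{d}\,\log(3)/\sqrt{d}\cdot\sqrt{d} = \log(3)/\sqrt{d}$—so I must be careful here: the solution is $\delta_0 = \log(3)/\sqrt{d}$, which is \emph{smaller} than the target, not larger.

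This sign discrepancy reveals where the real content lies: collinear lattice points make distances \emph{larger}, hence $\delta_0$ \emph{smaller}, so the diagonal construction goes the wrong way. To obtain a large $\delta_0$ I instead want the nonzero exponents clustered as close to the origin as the integer lattice permits, so that the factor $e^{-\delta|\lambda_k - \lambda_\iota|}$ decays slowly and the root of $\Xi_\iota(\delta)=1$ is pushed up. The sharpest such configuration is to take $\Lambda = \{0, e_1, \dots, e_d\} \subset \ZZ^d$, the origin together with the $d$ standard basis vectors, each at distance exactly $1$ from the origin; choosing a diagonal point $\lambda_\iota = \tfrac{1}{d}(1,\dots,1)$ as the center (after an admissible shift, using the second remark at the start of the Polynomials section that $\delta_\iota$ increases as the magnitudes $|\lambda_k - \lambda_\iota|$ decrease) makes all $d$ distances equal to $\sqrt{1 - 1/d}$, and the characteristic equation $d\,e^{-\delta\sqrt{1-1/d}} = 1$ yields $\delta = \log(d)/\sqrt{1-1/d}$. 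The main obstacle is therefore selecting the configuration and the internal point $\iota$ that \emph{simultaneously} makes all relevant lattice distances small and equal while keeping the count of such points large enough to force $\Xi_\iota(\delta) \geq 1$ at $\delta = \sqrt{d}\,\log(3)$; I expect the clean argument to come from placing $2d$ symmetric points $\pm e_j$ around the origin and solving $2d\,e^{-\delta} = 1$, or more likely from a direct scaling of the known univariate extremal example into a $d$-dimensional box, and then verifying via the monotonicity remark that shrinking the magnitudes to their lattice minima lands the root at or above $\sqrt{d}\,\log(3)$.
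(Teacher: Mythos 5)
Your opening paragraph identifies the correct strategy --- exhibit an integer support set $\Lambda$ and an index $\iota$ with $\Xi_\iota(\delta)\geq 1$ for $\delta$ arbitrarily close to $\sqrt{d}\,\log(3)$, then invoke Theorem~\ref{thm:DistanceConverse} --- and your diagnosis that collinear points along the diagonal push the root the wrong way is also correct. But the argument never closes, and the candidate configurations you end with cannot close it. The sets $\{0,e_1,\dots,e_d\}$ and $\{0,\pm e_1,\dots,\pm e_d\}$ contain only $O(d)$ points at bounded distance from the center, so their characteristic equations have roots of order $\log(d)$, asymptotically far below $\sqrt{d}\,\log(3)$. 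The missing idea is that the number of relevant lattice points must be \emph{exponential} in $d$: the paper takes $S=\{-1,0,1\}^d\setminus\{0\}$, with $3^d-1$ elements each of norm at most $\sqrt{d}$, and lets $\Lambda_m$ consist of the first $m$ lattice points along each of the $3^d-1$ rays these determine. As $m\to\infty$ each ray contributes at least the geometric series $e^{-\sqrt{d}\,\delta}/(1-e^{-\sqrt{d}\,\delta})$, so for every $\varepsilon>0$ and $m$ large one has
\[
\Xi_0(\delta)+\varepsilon \;\geq\; \bigl(3^d-1\bigr)\,\frac{e^{-\sqrt{d}\,\delta}}{1-e^{-\sqrt{d}\,\delta}},
\]
and the right-hand side is at least $1$ exactly when $e^{-\sqrt{d}\,\delta}\geq 3^{-d}$, i.e.\ when $\delta\leq\sqrt{d}\,\log(3)$. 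It is the identity $\log(3^d)/\sqrt{d}=\sqrt{d}\,\log(3)$ that produces the claimed bound; no configuration with polynomially many points can reproduce it.

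There is also a secondary logical error. You propose to move the center to the non-lattice point $\tfrac1d(1,\dots,1)$ and justify this via the monotonicity remark (shrinking the magnitudes $|\lambda_k-\lambda_\iota|$ increases the root $\delta_\iota$). That remark runs in the wrong direction for a lower bound: since $\Delta_d$ is the sharp bound over genuine integer support sets, you must exhibit an actual $\Lambda\subset\ZZ^d$ whose root is large. Computing the root of a fictitious configuration with artificially shrunken distances only shows that the fictitious root dominates the true one, which gives no information from below.
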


\begin{proof}
Let $\delta <  \sqrt{d}\,\log(3)$, and let 
$\xi(\delta)$ be defined by the relation
\[
 \xi(\delta) = \left(3^d - 1\right) \frac{e^{-\sqrt{d}\,\delta}}{1 - e^{-\sqrt{d}\,\delta}} - 1,
\]
so that $\xi(\delta) > 0$.

Let $S$ denote the set of all nonzero vectors in $\ZZ^d$ with entries in $\{-1,0,1\}$.
Then, $\# S = 3^d - 1$.
Each such vector has norm at most $\sqrt{d}$, and they specify
$3^d -1$ distinct rays emerging from the origin.
Let $\Lambda_m$ consist of the union of the $m$ first integer points along each ray
defined by an element of $S$.
Then, for each $\varepsilon > 0$, there exists a $M = M(\varepsilon)$ such 
that for all $m \geq M$ it holds that
\[
\Xi_0(\delta) + \varepsilon 
\geq \sum_{s\in S} \frac{e^{-\sqrt{s}\,\delta}}{1 - e^{-\sqrt{s}\,\delta}}
 \geq \left(3^d - 1\right) \frac{e^{-\sqrt{d}\,\delta}}{1 - e^{-\sqrt{d}\,\delta}}.
\]
In particular, if we choose $\varepsilon < \xi(\delta)$, then
\[
\Xi_0(\delta) \geq 1+\xi(\delta) - \varepsilon > 1.
\]
In particular, it follows from Theorem \ref{thm:DistanceConverse} that $\Delta_d > \delta$.
Since $\delta < \sqrt{d}\,\log(3)$ was arbitrary, the result follows.
\end{proof}

%%%%%%%%%%%%%%%%%%%%%%%%%%%%%%%%%%%%%%%%%%
\section{The general case}%%%%%%%%%%%%%%%%%%%%%
%%%%%%%%%%%%%%%%%%%%%%%%%%%%%%%%%%

Let us now consider the general case of $\Lambda \subset \RR^d$. 
Recall that $\mu = \mu(\Lambda)$ denotes the minimal (Euclidean) distance between
two points of $\Lambda$.
Our approach to prove Theorem \ref{thm:DistanceBound} is
the following: for each $\iota$, we will approximate $\Lambda$
with a subset of a dilation of $\ZZ^d$ such that the function $\Xi_\iota(\delta)$ can only increase.

\begin{proof}[Proof of Theorem \ref{thm:DistanceBound}]
Fix $\iota$. It suffices to show that $\Xi_\iota(\delta) < 1$ for $\delta$ fulfilling \eqref{eqn:GeneralDegreeBound}. Consider, for each $\lambda_k \in \Lambda$
with $k\neq \iota$ a closed $d$-dimensional ball $S_k$ centered at $\lambda_k$ with radius 
$\mu/2$. Since all pairs of points of $\Lambda$ are of distance at least $\mu$, two such spheres intersect in one point at the most.

Each sphere $S_k$ contains as a subset a $d$-dimensional cube $B_k$
with side lengths $l = {\mu}/{\sqrt{d}}$ and centre of mass at $\lambda_k$.
Consider the dilated lattice 
\[
\Gamma = \frac{\mu}{2\sqrt{d}}\,\ZZ^d.
\]
Then, either $\lambda_k \in \Gamma$ in which case we set $\hat \lambda_k = \lambda_k$,
or there exists a point $\hat \lambda_k \in \Gamma$ in the interior of $S_k$ such
that $|\hat \lambda_k| \leq |\lambda_k|$. Indeed, the vector $-\sgn(\lambda_k)$ 
(with arbitrary entries $\pm 1$ for the components of $\lambda_k$ which vanishes)
determines a ``quadrant'' of $B_k$ which is completely contained in the half space
$\{\xi\in \RR^d\, | \, |\xi| \leq |\lambda_k|\}$. Such a quadrant contains at least one point
of $\Gamma$ in its closure. If $\Gamma$ belongs to the boundary of $S_k$, then $\Gamma$ is
a vertex of $B_k$, in which case all vertices the corresponding quadrant of $B_k$,
including $\lambda_k$, belongs to $\Gamma$.

Set $\hat \lambda_\iota = \lambda_\iota$, and let $\hat \Lambda \subset \Gamma$ be the support set
containing all $\hat \lambda_k$ for $k=0, \dots, n$. Since $|\hat \lambda_k| \leq |\lambda_k|$,
it holds that
\[
\Xi_\iota(\delta) \leq \hat \Xi_\iota(\delta),
\]
hence it suffices to show that %the right hand side 
$\Xi_\iota(\delta)$ is smaller than one, for $\delta$ fulfilling \eqref{eqn:GeneralDegreeBound}.

Let $B \subset \ZZ^d$ be such that $\hat \Lambda=  \frac{\mu}{2\sqrt{d}}B$.
Let $\hat\lambda \in \hat \Lambda$, with $\hat\lambda=  \frac{\mu}{2\sqrt{d}}\beta$ for some
$\beta \in \ZZ^d$. It follows that $|\hat\lambda| =  \frac{\mu}{2\sqrt{d}}|\beta|$, and we can conclude 
that
\[
\hat \Xi_\iota(\delta) = \tilde \Xi_\iota\left(\frac{\mu}{2\sqrt{d}}\, \delta \right)
\]
where $\tilde\Xi_\iota$ is the characteristic function of $\beta_\iota$ with respect to the
support set $B$. It thus follows from Theorem \ref{thm:PolynomialDistanceBound} that
the right hand side is majorized by one if
\[
\frac{\mu}{2\sqrt{d}}\,\delta \geq d \log\left(2+\sqrt{3}\right),
\]
which is equivalent to \eqref{eqn:GeneralDegreeBound}.
\end{proof}

%%%%%%%%%%%
\subsection{On the relation between $\Delta_d$ and $\hat \Delta_d(\mu)$.}
\label{ssec:Honeycomb}
Recall that $\hat \Delta_d(\mu)$ denotes the sharp degree bound for exponential sums. 
Let us present a general construction, which serves as the basis of our examples.
Let $\1$ denote the $d\times d$-matrix all whose entries are equal to $1$, 
and let $\I$ denote the $d\times d$ identity matrix. 
Define a transformation $T$ by
\[
T = \frac{\1 \,\varepsilon + \I}{\sqrt{2}},
\]
where
\[
\varepsilon = \frac1d\left(\sqrt{1+d} - 1\right).
\]
Notice that the determinant of $T$ is ${\sqrt{1+d}}/{\sqrt{2}}$.
The matrix $T$ has as eigenvalues $\varepsilon/\sqrt{2}$, with an $(d-1)$-dimensional eigenspace
spanned by the vectors $e_j - e_k$ for $j\neq k$, and $\sqrt{1+d}/\sqrt{2}$, with eigenvector
$\1$. In particular, the spectral value of $T$ is $\sqrt{1+d}/\sqrt{2}$.
We will consider the lattice $T \ZZ^d$ generated by the columns of $T$,
where we notice in particular that $\mu(T\ZZ^d) = 1$.
Hence, $T$ alters areas and distances %by rescaling of ${\sqrt{1+d}}/{\sqrt{2}}$,
but leaves the parameter $\mu$ invariant when acting on the lattice $\ZZ^d$. 
One can show that for polynomials with few terms (i.e., at most $2(d+1)$ terms)
the lattice $T\ZZ^d$ is the ``worst possible'' in terms of maximizing the distance
from a point in $\A(f)$ to $\T(f)$.
However, there is no monotone relation
between $T\ZZ^d$ and arbitrary support sets in general. 

\begin{example}
In the case $d = 2$, the lattice $T \ZZ^d$ is the Honeycomb lattice. There are six
points in $T \ZZ^d$  of distance one from the origin, and six points of distance
$\sqrt{3}$. It follows, by an argument analogous to the proof of Proposition~\ref{pro:LowerPolynomialBound}, that $\hat \Delta_2(1)$ is strictly greater than the unique positive root
of the equation
\[
6 e^{-\delta} + 6 e^{-\sqrt{3}\, \delta} = 1.
\]
A numerical computation, aided by \textsc{Mathematica}, gives that this root is approximately $1.99984\dots$. Notice that $\mu = 1$ in this case. In particular, we have that
\[
\hat \Delta_2(1) \geq 1.99984\ldots > 1.99508\ldots = \mu\, \Delta_2,
\]
so that $\hat \Delta_2(1)$ is strictly greater than $\Delta_2$ in this case.
\end{example}

\begin{question}
\label{q:WorstCase}
Is $\sqrt{2}\,\hat \Delta_d(\mu) = \mu\,{\sqrt{1+d}}\,\Delta_d$?
\end{question}

%%%%%%%%%%%%%%%%%%%%%%%%%%%%%%%%%%%%%%%%%%%%%%%%%%%%%%
%\begin{thebibliography}{GKZ}%%%%%%%%%%%%%%%%%%%%%%%%%
%%%%%%%%%%%%%%%%%%%%%%%%%%%%%%%%%%%%%%%%%%%%%%%%%%%%%%
\bibliographystyle{amsplain}
%before: ams-alpha

\renewcommand{\bibname}{References} % changes the header from Bibliography to References

%\bibliography{../references/references} % adjust this to fit your BibTex file

%\providecommand{\bysame}{\leavevmode\hbox to3em{\hrulefill}\thinspace}
%\providecommand{\MR}{\relax\ifhmode\unskip\space\fi MR }
%% \MRhref is called by the amsart/book/proc definition of \MR.
%\providecommand{\MRhref}[2]{%
%  \href{http://www.ams.org/mathscinet-getitem?mr=#1}{#2}
%}
%\providecommand{\href}[2]{#2}

\end{document}